\newtheorem{theorem}{Theorem}
\newtheorem*{theorem*}{Theorem}
\newtheorem*{lemma*}{Lemma} 
\newtheorem*{corollary*}{Corollary}
\newtheorem{fact}{Fact} 
\theoremstyle{definition} 
\newtheorem*{remark*}{Remark}
\newtheorem{definition}{Definition}
\newtheorem*{definition*}{Definition}
\newtheorem*{example*}{Example}
\newcommand{\ind}{\operatorname{ind}}
\newcommand{\Ind}{\operatorname{Ind}}
\newcommand{\ord}{\operatorname{ord}}
\def\cl#1{\skew3\overline{#1}}
\def\clx#1#2{\skew3\overline{#1}^{\raise.5pt\hbox{\scriptsize$\,#2$\!}}}
\def\clrx#1#2{\skew3\overline{#1\hspace{-0.1em}}^{\raise.5pt\hbox{\,\scriptsize$#2$}}}
\def\clax#1#2#3{\skew6\overline{#1_{\rlap{$\scriptstyle 
#2$}}}^{\raise.5pt\hbox{\hspace{0.2667em}\scriptsize$#3$}}\!
}
\def\clarx#1#2#3{\skew7\overline{\,#1_{\rlap{$\scriptstyle 
#2$}}}^{\raise.5pt\hbox{\hspace{0.2667em}\scriptsize$#3$}}\!
}
\def\Cl#1^#2{\cl{#1\kern-.09em}\kern.09em\vphantom{#1}^{#2}}
\newcommand\cU{\mathscr U}
\newcommand\cV{\mathscr V}
\def\a{\alpha}
\newcommand\bR{\mathbb R}
\newcommand\bN{\mathbb N}
\let\le\leqslant
\let\ge\geqslant
\begin{document}

\title{The Covering Dimension\\ 
of the Sorgenfrey Plane} 
\author{Ol'ga Sipacheva}

\begin{abstract}
It is proved that the square of the Sorgenfrey line has infinite covering dimension. 
\end{abstract}

\keywords{Sorgenfrey line, Sorgenfrey plane, covering dimension, cozero covering dimension}

\subjclass[2020]{54F45}

\address{Department of General Topology and Geometry, Faculty of Mechanics and  Mathematics, 
M.~V.~Lomonosov Moscow State University, Leninskie Gory 1, Moscow, 199991 Russia}

\email{o-sipa@yandex.ru, osipa@gmail.com}

\maketitle

Covering dimension $\dim$ was originally introduced by Lebesgue for domains in Euclidean spaces~\cite{Lebesgue}. 
In 1933 \v Cech extended Lebesgue's definition to completely regular spaces~\cite{Cech}, and in 1950 Kat\v etov 
proposed a different definition, which coincided with \v Cech's one for normal spaces~\cite{Katetov}. Since then, 
these two versions of covering dimension have become equally popular. Thus, the books~\cite{AP}, \cite{Pears}, and 
\cite{Charalambous} use \v Cech's definition, while, e.g., \cite{Engelking} prefers Kat\v etov's one. There are 
also books on dimension theory which consider only normal or even only metrizable spaces~\cite{HW}, 
\cite{Nagami}.

We will denote \v Cech's covering dimension by $\dim$ and Kat\v etov's by $\dim_0$ and refer to them as 
``covering dimension'' and ``cozero covering dimension,'' respectively. The definitions of both of them involve 
the well-known notion of the order of a cover.

\begin{definition}
Let $X$ be a set, and let $\mathscr F$ be a family of its subsets. If 
there exists an integer $n\ge -1$ such that each point $x\in X$ belongs to at most $n+1$ elements of 
$\mathscr F$, then the least such $n$ is called the \emph{order} of $\mathscr F$ 
and denoted by \emph{$\ord \mathscr F$}. If there is no such an integer $n$, then 
$\ord \mathscr F=\infty$.
\end{definition}

In other words, $\ord \mathscr F$ is the largest integer $n$ for which there are $n+1$ members of $\mathscr F$ 
with nonempty intersection if such an integer exists and $\ord \mathscr F=\infty$ otherwise.

\begin{definition}[\v Cech (Kat\v etov)]
The \emph{covering dimension $\dim X$} (\emph{cozero covering dimension $\dim_0 X$}) 
of a topological space $X$ is the least integer $n\ge -1$ 
such that any finite open (cozero) cover of $X$ has a finite open (cozero) refinement of order $n$, provided that 
such an integer exists. If it does not exist, then $\dim X=\infty$ ($\dim_0 X = \infty$). 
\end{definition}

By a \emph{strongly zero-dimensional} space most authors mean a space $X$ with $\dim_0 X=0$ (although, e.g., 
Charalambous~\cite{Charalambous} uses this term for a space $X$ with $\dim X=0$, in which case a nonnormal space 
cannot be strongly zero-dimensional).  

The definitions of the two other most classical dimension functions, $\ind$ and $\Ind$, can be found in any book 
on dimension theory and are defined in more or less the same way. The only minor distinction in their 
definitions is that some authors consider the $\ind$ ($\Ind$) dimension to be undefined for 
nonregular (nonnormal) spaces, while others define these dimensions to be infinite for such spaces. We 
follow the latter. 

It follows directly from the definitions that 
\begin{itemize}
\item
if $\dim X = 0$, then $X$ is normal;
\item 
$\dim_0 X= \dim_0\beta X=\dim \beta X$.
\end{itemize}

This paper is devoted to the \v Cech covering dimension of the Sorgenfrey plane $S\times S$. (We use the 
standard notation $S$ for the Sorgenfrey line and assume that the underlying set of $S$ is $\bR$.) Obviously, 
$\ind S=\ind S^\omega=0$, $\Ind S\times S = \infty$ (because $S\times S$ is not normal), and $\dim S\times S> 0$ 
(for the same reason). It is also known that $\dim S=0$, because $S$ is Lindel\"of and $\dim \le \ind$ for 
Lindel\"of spaces (see, e.g., \cite[Proposition~5.3]{Charalambous}), and that $\dim_0 S^\omega=0$ and hence 
$\dim_0 S^n =0$ for all $n\in \omega$ \cite{Terasawa}. Moreover, $\dim_0 S^\kappa=0$ for any cardinal $\kappa$; 
Terasawa credited this result to K.~Morita in \cite{Terasawa}, and its proof can be found in~\cite{Wage}. We prove 
that $\dim S\times S=\infty$.

\begin{theorem}
$ \dim S\times S=\infty$. 
\end{theorem}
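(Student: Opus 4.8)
\emph{Proof proposal.} The plan is to show that for every integer $n\ge 1$ the space $S\times S$ carries a finite open cover admitting no finite open refinement of order $<n$; since $n$ is arbitrary this yields $\dim(S\times S)=\infty$. By the standard reductions in covering-dimension theory --- from finite open refinements to same-indexed open shrinkings, and from arbitrary finite covers to covers of bounded cardinality, both valid for arbitrary topological spaces --- it suffices to exhibit, for each $n$, an \emph{essential} open cover $\{U_1,\dots,U_{n+1}\}$ of $S\times S$: one for which every open shrinking $\{V_1,\dots,V_{n+1}\}$, $V_i\subseteq U_i$, $\bigcup_i V_i=S\times S$, has $\bigcap_{i=1}^{n+1}V_i\ne\varnothing$. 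Throughout I would work near the anti-diagonal $L=\{(t,-t):t\in\bR\}$, a closed discrete subset on which the non-normality of $S\times S$ is concentrated, and I would encode a finite open cover in a neighbourhood of $L$ combinatorially: a colouring $t\mapsto c(t)\subseteq\{1,\dots,n+1\}$ of $\bR$ (the indices $i$ with $(t,-t)\in U_i$) together with, for each $i\in c(t)$, a width $\varepsilon_i(t)>0$ such that the basic square $[t,t+\varepsilon_i(t))\times[-t,-t+\varepsilon_i(t))$ lies in $U_i$. The arithmetic fact underlying everything is that two such squares, at parameters $t<t'$ and witnessing $(t,-t)\in U_i$ and $(t',-t')\in U_{i'}$, meet precisely when $t'-t<\min(\varepsilon_i(t),\varepsilon_{i'}(t'))$; combined with Baire's theorem this is exactly the mechanism by which the rationals and the irrationals of $L$ fail to be separated in $S\times S$.

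To build the cover I would fix a partition $\bR=R_1\sqcup\dots\sqcup R_{n+1}$ (and auxiliary finer partitions, ultimately traceable to a device --- such as an identification $\bR\cong\bR^{\,n}$ --- supplying $n$ ``independent coordinates'' along $L$) into pieces that are pairwise non-separable along $L$ in the strong, Baire-theoretic sense above, and set each $U_i$ to consist of narrow squares over the appropriate piece of $L$ together with a completion over $S\times S\setminus L$ that contributes nothing to the order: here I would use $\dim_0(S\times S)=0$, so that the ``cozero part'' of any cover can be refined to order zero and safely ignored. The combinatorial skeleton of $\{U_i\}$ along $L$ should be arranged so that a shrinking which avoids the common point is forced to ``separate'' the first coordinate of the bad pair, which in turn becomes possible only at the cost of separating the second coordinate, and so on: an $n$-step chain of required separations whose final link is the impossible separation of the rationals from the irrationals. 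Verifying essentialness then amounts to running this chain: from a shrinking with $\bigcap_i V_i=\varnothing$ one extracts a colouring $\sigma\preceq c$ of $\bR$ with widths, and through $n$ nested applications of Baire's theorem one descends to a subinterval of $\bR$ on which successively more of the width functions are bounded below, until the arithmetic of the squares produces two arbitrarily close parameters whose colours jointly exhaust $\{1,\dots,n+1\}$ --- that is, a point of $S\times S$ lying in every $V_i$.

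The crux, and the main obstacle, is precisely this descent. A finite open refinement of $S\times S$ is a priori wild: its members are arbitrary unions of basic squares with no uniformity along $L$, so the clean one-shot Baire argument that settles the case $n=1$ (non-normality) must be upgraded to a stratified $n$-step version and made to mesh with the combinatorial rigidity of the chosen cover. Equivalently, one must check that the $n$ obstructions packaged into $\{U_i\}$ are genuinely independent --- that a cunning refinement cannot discharge one of them by borrowing slack from another --- which is where the choice of the partitions $R_i$ (and of the coordinatisation of $L$) has to be made with care. A secondary, pervasive difficulty is that $S\times S$ is non-normal, so the familiar partition characterisation of $\dim$ is unavailable and the whole argument must be carried out with open covers and shrinkings directly, rather than with separated closed sets.
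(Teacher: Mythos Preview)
Your architecture is essentially the paper's: partition the anti-diagonal $L$ into finitely many pieces, build an open cover of $S\times S$ in which each point of $L$ lies in exactly one member, and then show via an iterated Baire descent that every open shrinking has high order near $L$. The ``$n$ nested applications of Baire's theorem'' you describe, producing a subinterval on which all width functions are simultaneously bounded below, is precisely the mechanism the paper uses.

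The genuine gap is the partition itself. You ask for pieces $R_1,\dots,R_{n+1}$ that are ``pairwise non-separable in the strong Baire-theoretic sense,'' but the property actually required for the descent is sharper: \emph{each $R_i$ must be of second category in every nonempty interval}. This is what lets you, after having found a subinterval on which $B_1(k),\dots,B_{j-1}(k)$ are dense, apply Baire once more to $R_j$ restricted to that subinterval and pass to a still smaller subinterval. Your analogy with $\mathbb Q$ versus $\mathbb R\setminus\mathbb Q$ is misleading here---the rationals are meager, and if any $R_i$ were meager the descent would stall at step~$i$. Likewise the hint ``an identification $\mathbb R\cong\mathbb R^{\,n}$'' does not by itself manufacture such a partition; a Borel bijection will typically send Baire-category information to nothing useful. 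The paper's answer is to take $n+2$ pairwise disjoint \emph{Bernstein sets}: each meets every uncountable closed set, hence is second category in every interval, and this is exactly what makes the $n$ obstructions ``genuinely independent'' in your sense.

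Two minor simplifications relative to your plan: the paper does not use narrow squares plus a $\dim_0$-completion away from $L$, but rather the full quadrants $U_i=\bigcup_{x\in B_i}[x,\infty)\times[-x,\infty)$ together with the single open half-plane $U_0=\{y<-x\}$; this already covers $S\times S$ and forces $V_i\cap L=U_i\cap L$ automatically. And at the end there is no need to produce two nearby parameters whose colours jointly exhaust the index set: once every $B_i(k)$ is dense in $(a,b)$, one simply picks $x_i\in B_i(k)\cap(a,a+\tfrac1k)$ for each $i$ and observes that the single point $(a+\tfrac1k,-a)$ lies in every square $[x_i,x_i+\tfrac1k)\times[-x_i,-x_i+\tfrac1k)\subseteq V_i$.
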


The proof of this theorem uses  shrinkings of  covers and several notions related to Baire categories. 

Recall that a cover $\{V_\a: \a \in A\}$ of a set $X$ is a \emph{shrinking} of a cover $\{U_\a: \a \in A\}$ if 
$V_\a\subset U_\a$ for each $\a\in A$. 

\begin{fact}[{see, e.g., \cite[Proposition~2.10]{Charalambous}}]
Given any space $X$ and any integer $n\ge -1$, 
$\dim X \le n$ if and only if any finite open cover 
$\{U_1,\dots, U_k\}$ of $X$ has an open shrinking $\{V_1,\dots, V_k\}$ of order $\le n$.
\end{fact}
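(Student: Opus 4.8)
The plan is to prove the two implications separately, the reverse one being essentially a tautology and the forward one requiring a standard amalgamation argument.

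For the implication ``shrinking $\Rightarrow$ dimension,'' I would simply observe that any shrinking $\{V_1,\dots,V_k\}$ of a cover $\{U_1,\dots,U_k\}$ is in particular a refinement of it, since $V_i\subset U_i$ already exhibits each $V_i$ as a subset of a member of the original cover. Thus, if every finite open cover admits an open shrinking of order $\le n$, then every finite open cover admits a finite open refinement of order $\le n$, which is precisely the defining condition $\dim X\le n$.

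For the converse, I would start from an arbitrary finite open cover $\{U_1,\dots,U_k\}$ and, invoking $\dim X\le n$, fix a finite open refinement $\mathscr W=\{W_1,\dots,W_m\}$ with $\ord\mathscr W\le n$. Because $\mathscr W$ refines the original cover, for each $j$ there is an index $\sigma(j)\in\{1,\dots,k\}$ with $W_j\subset U_{\sigma(j)}$; fixing one such choice yields a function $\sigma\colon\{1,\dots,m\}\to\{1,\dots,k\}$. I would then amalgamate the refinement along the fibers of $\sigma$, setting
$$V_i=\bigcup\{W_j: \sigma(j)=i\}\qquad(1\le i\le k),$$
with $V_i=\varnothing$ when no index maps to $i$. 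Each $V_i$ is open as a union of open sets, and $V_i\subset U_i$ since every $W_j$ appearing in the union lies in $U_i$, so $\{V_1,\dots,V_k\}$ is an open shrinking of $\{U_1,\dots,U_k\}$; it covers $X$ because every point lies in some $W_j$ and hence in $V_{\sigma(j)}$.

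The main point, and the only real obstacle, is verifying that this amalgamation does not raise the order. Suppose a point $x$ lies in $V_{i_1},\dots,V_{i_p}$ for distinct indices $i_1,\dots,i_p$. For each $\ell$, membership $x\in V_{i_\ell}$ yields some $j_\ell$ with $\sigma(j_\ell)=i_\ell$ and $x\in W_{j_\ell}$. Since the values $\sigma(j_\ell)=i_\ell$ are pairwise distinct, the indices $j_\ell$ are pairwise distinct as well, so $x$ belongs to at least $p$ distinct members of $\mathscr W$. As $\ord\mathscr W\le n$ forces $x$ into at most $n+1$ members of $\mathscr W$, we get $p\le n+1$. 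Hence $x$ lies in at most $n+1$ of the sets $V_i$, so $\ord\{V_1,\dots,V_k\}\le n$, completing the construction. The entire argument thus reduces to the observation that distinct amalgamated sets containing $x$ arise from distinct refinement elements containing $x$, which is what keeps the order under control.
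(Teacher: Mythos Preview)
Your argument is correct and is precisely the standard amalgamation proof of this characterization. Note, however, that the paper does not supply its own proof of this fact: it is stated with a reference to \cite[Proposition~2.10]{Charalambous} and used as a black box, so there is no in-paper argument to compare against.
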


Sets of first Baire category are said to be \emph{meager} and sets of the second Baire category, 
\emph{nonmeager}. Complements to meager sets are \emph{comeager}, or \emph{residual}. It is easy to show 
that any comeager set is nonmeager, but not vice versa. 

An example of a nonmeager set in $\mathbb R$ which is not comeager is any \emph{Bernstein set}, 
i.e., a set $B\subset \bR$ such that both $B$ and $\bR\setminus B$ intersect all uncountable 
closed (or, equivalently, compact) subsets of~$\bR$.

\begin{fact}[\cite{SL}]
\label{Fact1}
There exist $ 2^{\aleph_0}$ disjoint Bernstein sets.
\end{fact}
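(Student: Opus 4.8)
The plan is to reduce the problem to constructing $\mathfrak c = 2^{\aleph_0}$ pairwise disjoint sets, each of which merely \emph{meets} every uncountable closed subset of $\bR$, and then to observe that the second half of the Bernstein condition comes for free. Indeed, suppose $\{B_\xi : \xi < \mathfrak c\}$ are pairwise disjoint and each $B_\xi$ meets every uncountable closed set. Fix $\xi$ and an uncountable closed set $F$; choosing any $\eta \ne \xi$ and any point $x \in B_\eta \cap F$, disjointness gives $x \notin B_\xi$, so $x \in (\bR \setminus B_\xi) \cap F$. Hence $\bR \setminus B_\xi$ also meets $F$, and so every $B_\xi$ is a genuine Bernstein set. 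Thus it remains only to produce the disjoint family.

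The two standard inputs I would use are: (i) $\bR$ has exactly $\mathfrak c$ closed subsets, hence at most (and in fact exactly) $\mathfrak c$ uncountable ones, since every open set is a union of members of the countable rational-interval base; and (ii) every uncountable closed subset of $\bR$ has cardinality $\mathfrak c$ (by the Cantor--Bendixson theorem such a set contains a nonempty perfect set, and a nonempty perfect subset of $\bR$ has cardinality $\mathfrak c$). I would then enumerate the uncountable closed sets as $\{F_\a : \a < \mathfrak c\}$.

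For the construction I would run a transfinite recursion not over the closed sets themselves but over the index pairs. Fix a bijection between $\mathfrak c \times \mathfrak c$ and the initial ordinal $\kappa$ of cardinality $\mathfrak c$, writing the $\gamma$th pair as $(\a(\gamma), \xi(\gamma))$ for $\gamma < \kappa$. At step $\gamma$, having chosen points $p_\delta$ for all $\delta < \gamma$, I would pick $p_\gamma \in F_{\a(\gamma)}$ distinct from every $p_\delta$ with $\delta < \gamma$; this is possible because $|F_{\a(\gamma)}| = \mathfrak c$ while $|\{p_\delta : \delta < \gamma\}| = |\gamma| < \mathfrak c$. Finally I would set $B_\xi = \{p_\gamma : \xi(\gamma) = \xi\}$. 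Each $B_\xi$ meets every $F_\a$, because the step $\gamma$ corresponding to the pair $(\a, \xi)$ contributes the point $p_\gamma \in F_\a \cap B_\xi$; and the $B_\xi$ are pairwise disjoint because all the $p_\gamma$ were chosen distinct. Combined with the reduction above, this yields $\mathfrak c = 2^{\aleph_0}$ pairwise disjoint Bernstein sets.

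The step I expect to be the crux is the counting in the recursion, and it is the reason for indexing by pairs rather than by closed sets. If one instead processed one closed set $F_\a$ per stage and tried to throw $\mathfrak c$ points (one into each $B_\xi$) into it at once, then by the stage where $\a$ is infinite one would already have consumed $|\a|\cdot\mathfrak c = \mathfrak c$ points, and there would be no guarantee that $F_\a$ still has unused points left. Spreading the choices out over a well-ordering of $\mathfrak c \times \mathfrak c$ of order type $\kappa$ keeps the number of previously used points strictly below $\mathfrak c$ at every single step, so that a fresh point of the current $F_{\a(\gamma)}$ (which has full cardinality $\mathfrak c$) is always available; this is exactly what makes the simultaneous construction of $\mathfrak c$ disjoint Bernstein sets go through.
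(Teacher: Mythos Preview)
Your argument is correct: the reduction in the first paragraph is valid, the two cardinality facts you invoke are standard, and the transfinite recursion over a well-ordering of $\mathfrak c\times\mathfrak c$ of order type the initial ordinal $\kappa$ of cardinality $\mathfrak c$ does exactly what is needed, keeping fewer than $\mathfrak c$ points used at every stage so that a fresh point of the current $F_{\alpha(\gamma)}$ is always available. Your closing paragraph correctly identifies why the naive ``one closed set per stage'' approach breaks down.

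As for comparison: the paper does not give a proof of this fact at all. It is stated as a known result with a reference to Sierpi\'nski and Lusin~[SL], and is then used as a black box in the proof of the main theorem (only finitely many disjoint Bernstein sets are actually needed there). So there is no ``paper's own proof'' to compare against; your write-up supplies a clean, self-contained proof where the paper simply cites the literature.
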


\begin{fact}[{see, e.g., \cite[Chap.~3, Sec.~40, I, Theorem~3]{Kuratowski}}] 
\label{Fact2}
The intersection of any Bernstein set with any nontrivial interval is of second category in~$\bR$. 
\end{fact}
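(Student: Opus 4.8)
The plan is to argue by contraposition: I will assume that $B\cap I$ is meager and manufacture an uncountable closed subset of $\bR$ that misses $B$ entirely, which is impossible for a Bernstein set. Since $B\cap J\subset B\cap I$ for the interior $J=\operatorname{int}I$ of the interval, and any superset of a nonmeager set is nonmeager, it is enough to prove that $B\cap J$ is of second category for the nonempty open interval $J$; so from now on I work with $J$. Suppose, then, that $B\cap J$ is meager, say $B\cap J\subset\bigcup_{n\ge 1}C_n$ with every $C_n$ closed and nowhere dense. Put $U_n=\bR\setminus C_n$; each $U_n$ is open and dense, and the dense $G_\delta$ set $G=\bigcap_{n\ge 1}U_n=\bR\setminus\bigcup_{n\ge 1}C_n$ is disjoint from $B\cap J$, so that $G\cap J\subset J\setminus B$.

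The heart of the matter is to extract a perfect set from $G\cap J$. I would construct a Cantor scheme of nondegenerate closed intervals $\{I_s:s\in 2^{<\omega}\}$ subject to the conditions: $I_\varnothing\subset J$; for each $s$ the intervals $I_{s0}$ and $I_{s1}$ are disjoint and contained in $\operatorname{int}I_s$; $\operatorname{diam}I_s\to 0$ as $|s|\to\infty$; and $I_s\subset U_{|s|}$ whenever $|s|\ge 1$. The recursion is immediate: given $I_s$ with $|s|=n$, the set $\operatorname{int}I_s\cap U_{n+1}$ is nonempty and open because $U_{n+1}$ is dense, and hence contains two disjoint closed intervals of diameter less than $2^{-(n+1)}$, which I take for $I_{s0}$ and $I_{s1}$. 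The set $P=\bigcap_{n\ge 0}\bigcup_{|s|=n}I_s$ is then a decreasing intersection of nonempty compact sets, hence nonempty and compact; the branching together with the shrinking of diameters shows that $P$ has no isolated points, so $P$ is perfect and therefore uncountable (indeed $x\mapsto$ the unique point of $\bigcap_n I_{x\restriction n}$ embeds $2^\omega$ into $P$).

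Finally, every point of $P$ belongs to $I_{x\restriction m}\subset U_m$ for all $m\ge 1$, so $P\subset\bigcap_{m\ge 1}U_m=G$; and since $P\subset I_\varnothing\subset J$, this gives $P\subset G\cap J\subset J\setminus B$. Thus $P$ is an uncountable closed subset of $\bR$ with $P\cap B=\varnothing$, contradicting the fact that $B$, being a Bernstein set, meets every uncountable closed set. Hence $B\cap J$, and therefore $B\cap I$, is of second category in $\bR$. I expect the one genuinely delicate step to be the inductive choice of the branching intervals: it is precisely the density of each $U_{n+1}$ that lets two disjoint closed subintervals be fitted inside $\operatorname{int}I_s\cap U_{n+1}$, and this use of density is the whole point; the remaining verifications (nonemptiness, perfectness, and the containment $P\subset G$) are routine.
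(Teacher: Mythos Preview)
The paper does not supply its own proof of this fact; it simply cites Kuratowski's \emph{Topology}, Chap.~3, Sec.~40,~I, Theorem~3, and uses the statement as a black box. There is therefore nothing in the paper to compare your argument against.

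Your proof is correct and is essentially the standard argument underlying the cited reference: any comeager subset of a nonempty open interval contains a perfect (hence uncountable closed) set, obtained by a Cantor scheme that at stage $n$ burrows into the dense open set $U_n$. This forces $\bR\setminus B$ to contain an uncountable closed set whenever $B\cap J$ is meager, contradicting the Bernstein property. All the steps you flag---the reduction to the open interior $J$, the density of each $U_{n+1}$ guaranteeing room for two disjoint closed subintervals inside $\operatorname{int}I_s$, and the routine verification that $P$ is perfect and contained in $G\cap J$---are handled correctly.
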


\begin{proof}[Proof of the theorem]
Take any $n\in \bN$. We must prove that $\dim S\times S> n$. To this end, it suffices to produce a finite open 
cover of $S\times S$ which has no finite open shrinking of order $\le n$. We take disjoint Bernstein sets $B_1, 
\dots, B_{n+1}$ and put $B_{n+2}=\bR\setminus (B_1\cup\dots \cup B_{n+1})$; clearly, $B_{n+2}$ is a Bernstein 
set as well. 

We set $D = \{(x,-x): x\in \bR\}\subset S\times S$ and consider the cover 
$\cU=\{U_0,U_1,\dots, U_{n+2}\}$ of $S\times S$, where 
$$
U_0=\{(x, y): y<-x\}
$$
and
$$
U_i = \bigcup \{[x,\infty)\times [-x,\infty): x\in B_i\}\quad \text{for \ 
$i\le n+2$.}
$$
Let $\cV=\{V_0, V_1, \dots, V_{n+2}\}$ be an open shrinking of $\cU$. 

Obviously, $V_0=U_0$ and $V_i\cap D=U_i\cap D=\{(x,-x):x\in B_i\}$ for $i= 1,\dots, n+2$. 

For $i =1, \dots, n+2$ and  $k\in \bN$, we set $B_i(k)=\{x\in B_i: [x,x+\frac 
1k)\times [-x,-x+\frac 1k)\subset V_i\}$. Obviously, $B_i(k)\subset B_i(m)$ for $m\le k$. 
Our immediate goal is to 
define positive integers $k_1\le \dots\le k_{n+2}$ and nonempty intervals $ (a_1, b_1)\supset \dots \supset 
(a_{n+2}, b_{n+2})\ne \varnothing$ so that the set  $B_i (k_j)\cap (a_j, b_j)$ is dense in $(a_j, b_j)$ with 
respect to the usual topology on $\bR$ for each $j\le n+2$ and each $i \le j$.

Since $\bigcup\limits_{i\in \bN}B_1(i)= B_1$ and $B_1 $, being a Bernstein set, is nonmeager in 
$\bR$, it follows that $B_1(k_1)$ is not nowhere 
dense in $\bR$ for some positive integer $k_1$. This means that 
$B_1(k_1)\cap U$ is dense in $U$ for some nonempty open set $U\subset \bR$; hence there exist $a_1, 
b_1\in \bR$, $a_1< b_1$, for which $\cl{B_1(k_1)\cap (a_1, b_1)}=[a_1,b_1]$ (the closure is in $\bR$). 

Suppose that $1<j\le n+2$ and we have already found integers $k_ i$ and constructed intervals 
$(a_i, b_i)$ for $i<j$. 

The set $B_j$ intersects all uncountable closed subsets of the interval $[ a_{j-1}, b_{j-1}]$, because all such 
sets are also closed in $\bR$, and 
$B_j\cap [ a_{j-1}, b_{j-1}]$ is of second Baire category in $[a_{j-1}, b_{j-1}]$ (see Fact~\ref{Fact2}). 
Since $B_j(k)\subset B_j(m)$ for $k\le m$, there exists a positive integer $k_j\ge k_{j-1}$ for which $B_j(k)$ is 
not nowhere dense in $[a_{j-1}, b_{j-1}]$. Let $(a_j, b_i)\subset (a_{j-1}, b_{j-1})$ be a nonempty interval for 
which $\cl{B_j(k_j)\cap (a_j,b_j)}=[a_j,b_j]$. 

At the $(n+2)$th step we obtain an interval $(a_{n+2}, b_{n+2})$ and a number $k_{n+2}$. We set  $(a,b)=(a_{n+2}, 
b_{n+2})$ and $k= k_{n+2}$. By construction $(a,b)\subset (a_i, b_i)$ and  
$\cl{B_i(k_i)\cap (a,b)}=[a,b]$ for all $i \le n+2$. Since $k\ge k_i$ and $B_i(k_i)\subset B_i(k)$ for all 
$i\le n+2$, it follows that $\cl{B_i(k)\cap (a,b)}=[a,b]$ for all $i \le n+2$. 

For each $j\le n+2$, there is a point $x_j\in (a,b)\cap (a,a+\frac 1k)\cap B_j(k)$. It is easy to see that  
$[x_j,x_j+\frac 1k)\times  [-x_j,-x_j+\frac 1k)\ni (a+\frac 1k, -a)$. By the definition of $B_j(k)$ we have 
$[x_j,x_j+\frac 1k)\times  [-x_j,-x_j+\frac 1k)\subset V_j$. Therefore, $(a+\frac 1k, -a)\in V_j$ for all 
$j=1,\dots, n+2$, and the order of the cover $\cV$ equals~$n+1$. 
\end{proof}

\begin{remark*}
The same argument proves that the covering dimension of the Niemytski plane is infinite.
\end{remark*}

\end{document}